\def\R{\mathbb R}
\def\RP{\mathbb {RP}}
\def\CP{\mathbb {CP}}
\def\N{\mathbb N}
\def\Z{\mathbb Z}
\def\C{\mathbb C}
\def\area{\mathrm{area}}
\newtheorem*{conj}{Conjecture}
\newtheorem{thm}{Theorem}[section]
\newtheorem{cor}[thm]{Corollary}
\theoremstyle{remark}
\theoremstyle{definition}
\title{The Willmore conjecture}
\author{Fernando C. Marques and Andr\'e Neves}
\address{Instituto de Matem\'atica Pura e Aplicada (IMPA) \\ Estrada Dona Castorina 110 \\ 22460-320 Rio de Janeiro \\ Brazil}
\email{coda@impa.br}
\address{Imperial College \\ Huxley Building \\ 180 Queen's Gate \\ London SW7 2RH \\ United Kingdom}
\email{a.neves@imperial.ac.uk}
\thanks{The first author was partly supported by CNPq-Brazil and FAPERJ. The second author was partly supported by Marie Curie IRG Grant and ERC Start Grant.}
\begin{document}

\maketitle

\begin{abstract}
The Willmore conjecture, proposed in 1965, concerns the quest to find the best torus of all.  This problem has inspired a lot of mathematics over the years, helping bringing together ideas from subjects like conformal geometry,  partial differential equations, algebraic geometry  and geometric measure theory. 

In this  article we survey the history of the conjecture and  our recent solution  through the min-max approach. We finish with a discussion of some of the many open questions that remain in the field.
\end{abstract}

%\setcounter{tocdepth}{1}
%\tableofcontents
\section{Introduction}

A central theme in Mathematics and particularly in Geometry has been the search for the optimal representative within a certain class of objects. Partially motivated by this principle, Thomas Willmore started in the $1960$s the quest for the optimal  immersion of a compact surface in three-space. This optimal shape was to be found, presumably, by minimizing a natural energy over all compact surfaces in $\R^3$ of a given topological type. In this survey article we discuss the history and our recent solution of the Willmore conjecture, the problem of determining the best torus among all.

We begin by defining the energy. Recall that the local geometry of a surface in $\R^3$ around a point $p$ is described by the principal curvatures $k_1$ and $k_2$, the maximum and minimum curvatures among all intersections of the surface with perpendicular planes passing through $p$. The classical notions of curvature of a surface are then the {\it mean curvature} $H=(k_1+k_2)/2$ and the {\it Gauss curvature} $K=k_1k_2$.
With the aforementioned question in mind, Willmore associated to every compact surface $\Sigma\subset \R^3$ a quantity now known as the {\em Willmore energy}:
\[W(\Sigma)=\int_\Sigma H^2 d\mu = \int_{\Sigma}\left(\frac{k_1+k_2}{2}\right)^2d\mu,\]
where $d\mu$ stands for the area form of $\Sigma$. 

The Willmore energy is remarkably symmetric. It is invariant under rigid motions and scalings, but less obvious is the fact that it is invariant also under the inversion map $x\mapsto x/|x|^{2}.$ Hence $W(F(\Sigma))=W(\Sigma)$ for any conformal transformation $F$ of three-space. It is interesting that Willmore himself  became aware of this fact only after reading the paper of White \cite{whitej}, many years after he started working on the subject.  As we will   explain later, this conformal invariance was actually known already in the 1920s.
%Moreover, from Gauss--Bonnet Theorem, we also have that 
%\[ \mathcal W(\Sigma)=2\pi\chi(\Sigma)+\int_{\Sigma}\left(\frac{k_1-k_2}{2}\right)^2d\mu.\]

 In applied sciences this energy had already been introduced long ago,  to study vibrating properties of thin plates.  Starting in the $1810$s, Sophie Germain \cite{germain} proposed, as the elastic or bending energy of a thin plate, the integral  with respect to the surface area of an even, symmetric function of the principal curvatures. The Willmore energy is the simplest possible example (excluding the area fuctional). Similar quantities were considered by Poisson \cite{poisson} around the same time. 
 
The Willmore energy had also appeared  in Mathematics in the 1920s through  Blaschke \cite{blaschke} and his student Thomsen \cite{thomsen}, of whose works  Willmore was not initially aware. The school of Blaschke was working under the influence of Felix Klein's Erlangen Program, and they wanted to understand the invariants of surface theory in the presence of an  action of a group of transformations. This of course included the M\"{o}bius group - the conformal group acting on Euclidean space with a point added at infinity.   

A natural map studied by Thomsen in the context of Conformal Geometry, named the {\it conformal Gauss map} by Bryant (who rediscovered it in \cite{bryant2}), associates to every point of an oriented compact surface in $\R^3$ its central sphere, the unique oriented sphere having the same normal and mean curvature as the surface at the point. This is a conformal analogue of the Gauss map, that associates to every point of an oriented surface its unit normal vector.  
If we include planes as spheres of mean curvature zero, the space of  oriented spheres in $\R^3$, denoted here by $\mathcal Q$, can be identified with the 4-dimensional Lorentzian unit sphere in the five-dimensional Minkowski space. The basic principle states  that applying a conformal map to the surface corresponds to applying an isometry of $\mathcal Q$ to its conformal Gauss map. Since  the area of the image of a closed surface under the conformal Gauss map is  equal to the Willmore energy of the surface minus the topological constant $2\pi\chi(\Sigma)$, the conformal invariance of ${W}(\Sigma)$ becomes apparent this way.

%The Gauss map is a well-known map in Geometry that associates to every point of an oriented compact surface in $\R^3$ its %unit normal vector.  

Finally, the Willmore energy  has continued to appear in applied fields, like in biology to study the elasticity of cell membranes (it is the highest order term in  the Helfrich model \cite{helfrich}), in computer graphics in order to study surface fairing \cite{lott}, or in geometric modeling \cite{botsch}.

Back to the quest for the best  immersion, Willmore showed that round spheres have the least possible Willmore energy among all compact surfaces in three-space.  More precisely,  he proved that every compact surface $\Sigma\subset \R^3$ satisfies
$$W(\Sigma)\geq 4\pi,$$ with equality only for  round spheres.

Here is a geometric way to see this. First note that when a plane is translated from very far away and touches the surface for the first time, it will do so tangentially in a point where the principal curvatures share the same sign. At such points the Gauss curvature $K=k_1k_2$ must necessarily be nonnegative. Therefore the image of the set of points where $K\geq 0$ under the Gauss map $N$ must be the whole $S^2$. Since $K=\det(dN)$,  the area formula tells us that $\int_{\{K\geq 0\}}K d\mu\geq {\rm area}(S^2)=4\pi$. The result of Willmore follows because $H^2 \geq K$, with equality at umbilical points ($k_1=k_2$), and the only totally umbilical surfaces of $\R^3$ are the round spheres.

Willmore continued the study of his energy and, having found the compact surface with least possible energy,  tried to find the minimizing shape among the class of tori  \cite{willmore}. It is interesting to note that no obvious candidate stands out a priori. To develop intuition about the problem, Willmore considered a special type of torus: he  fixed a circle $C$ of radius $R$ in a plane and looked at the tube $\Sigma_r$ of constant radius $r<R$ around $C$. When $r$ is small, $\Sigma_r$ is very thin and the energy 
 $W(\Sigma_r)$ is very large. If we keep increasing the value of $r$, the size of the middle hole of the torus decreases and  eventually the hole disappears for  $r=R$. The energy $W(\Sigma_r)$ becomes  arbitrarily large when $r$ approaches  $R$. Therefore the function $r \mapsto W(\Sigma_r)$  must have an absolute minimum in the interval $(0,R)$, which Willmore computed to be $2\pi^2$. 

Up to scaling, the optimal tube in this class has $R=\sqrt{2}$ and $r=1$: 
$$
\Sigma_{\sqrt{2}}=\{\big( (\sqrt{2} +\cos\, u) \cos\, v, (\sqrt{2}+\cos\,u)\sin\,v, \sin\,u) \in \mathbb{R}^3: u, v \in \mathbb{R}\}.
$$
In light of his findings, Willmore conjectured that this  torus of revolution should minimize the Willmore energy among all tori in three-space:
\begin{conj}[Willmore, 1965] Every compact surface $\Sigma$ of genus one in $\mathbb{R}^3$ must satisfy
$$W(\Sigma)\geq 2\pi^2.$$
\end{conj}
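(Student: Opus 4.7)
My plan is to exploit the conformal invariance of $W$ and transfer the problem from $\R^3$ to the round 3-sphere $S^3$. Via inverse stereographic projection, $W(\Sigma)$ becomes $\int_{\tilde\Sigma}(1+H^2)\,d\mu$ for the corresponding $\tilde\Sigma\subset S^3$, where $H$ is now the mean curvature in $S^3$. The Clifford torus $\hat\Sigma=\{(z,w)\in\C^2:|z|^2=|w|^2=1/2\}$ is minimal in $S^3$ with area exactly $2\pi^2$, so the conjecture reduces to showing that every embedded torus $\tilde\Sigma\subset S^3$ satisfies $\int_{\tilde\Sigma}(1+H^2)\,d\mu\geq 2\pi^2$. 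I will attack this by a topological min-max scheme on the space of $2$-cycles in $S^3$.

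Next, to each embedded torus $\tilde\Sigma\subset S^3$ I would attach a canonical $5$-parameter family of surfaces $\Sigma_{v,t}\subset S^3$, indexed by $(v,t)\in B^4\times[-\pi,\pi]$. The parameter $v$ in the open ball $B^4$ selects a conformal automorphism $F_v$ of $S^3$ (the $4$-dimensional M\"obius group), and $t\in[-\pi,\pi]$ denotes signed geodesic distance to $F_v(\tilde\Sigma)$, so $\Sigma_{v,t}=\partial\{x\in S^3 : \dist(x,F_v(\tilde\Sigma))\leq t\}$. A direct parallel-surface computation inside $S^3$ combined with conformal invariance yields the crucial a priori bound
\[
\area(\Sigma_{v,t})\leq \int_{F_v(\tilde\Sigma)}(1+H^2)\,d\mu=W(\Sigma),
\]
uniformly in $(v,t)$, together with collapse of $\Sigma_{v,t}$ to a point at $t=\pm\pi$.

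The core of the argument is topological. As $v$ approaches a boundary point $p\in\partial B^4=S^3$, the conformal dilation $F_v$ concentrates $\tilde\Sigma$ near $p$, and the $t$-sweep of $\Sigma_{v,t}$ degenerates into the standard foliation of $S^3$ by geodesic spheres centered at $p$. Combined with the collapse at $t=\pm\pi$, the map $(v,t)\mapsto\Sigma_{v,t}$ descends, on the boundary of the $5$-ball, to a map into the space of mod-$2$ $2$-cycles in $S^3$ whose homotopy class is detected by the fifth power of the generator of $\mathbb{Z}/2$-cohomology; this is where $\mathrm{genus}(\tilde\Sigma)\geq 1$ is used, via a linking-number calculation showing that the family is a genuine $5$-sweepout in the sense of Almgren--Pitts. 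Applying Almgren--Pitts min-max theory then produces a closed embedded minimal surface $\hat\Gamma\subset S^3$ with $\area(\hat\Gamma)\leq W(\Sigma)$, and a Lusternik--Schnirelman comparison with the $4$-parameter family of round geodesic spheres of $S^3$ forces $\area(\hat\Gamma)>4\pi$, ruling out the round sphere as the min-max critical varifold.

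The main obstacle will then be to prove the following purely intrinsic statement: every closed embedded minimal surface in $S^3$ that is not a round sphere has area at least $2\pi^2$, with equality only for the Clifford torus up to ambient isometry. This is essentially a strengthening of a conjecture of Yau and is where the hardest analysis lives — the min-max output is only a priori a stationary integral varifold, so one must control regularity, topology, and the multiplicity, then separate the Clifford torus from all other candidates by a comparison argument. Granted this, the chain $2\pi^2\leq\area(\hat\Gamma)\leq W(\Sigma)$ closes the conjecture, and the equality case pulls back through the conformal invariance to identify any minimizer with the stereographic image of $\hat\Sigma$.
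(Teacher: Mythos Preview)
Your overall architecture matches the paper's: transfer to $S^3$, build the canonical $5$-parameter family from conformal dilations and signed equidistants, bound all areas by $\mathcal W(\Sigma)$, and run Almgren--Pitts min-max. Two points diverge, and the second is a genuine gap.

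First, your description of the boundary behavior is oversimplified. When $v\to p\in S^3$ with $p\in\tilde\Sigma$, the limit of $F_v(\tilde\Sigma)$ depends on the \emph{direction} of approach and is a round sphere tangent to $\tilde\Sigma$ at $p$, not the foliation by spheres centered at $p$; the family is genuinely discontinuous there. The paper repairs this by a blow-up along $\tilde\Sigma$ and reparametrizes to a continuous $\Phi:I^5\to\mathcal Z_2(S^3)$. The key topological input is then that the resulting center map $Q:\partial I^4\to S^3$ has \emph{integer degree equal to the genus of $\tilde\Sigma$}. It is this degree computation---not a $\Z/2$ ``fifth power'' $5$-sweepout detection---that is used to show $L([\Phi])>4\pi$.

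Second, and this is the real gap: to identify the min-max surface you propose to invoke the statement that every closed embedded minimal surface in $S^3$ other than a great sphere has area at least $2\pi^2$. That statement was not known independently; Marques--Neves obtain it \emph{as a corollary} of the very $2\pi^2$ Theorem under discussion (apply the theorem to the canonical family of the minimal surface itself, for which $\mathcal W=\area$). Taking it as an input is circular. The paper's actual device is Urbano's 1990 theorem: a closed minimal surface in $S^3$ with Morse index at most $5$ must be an equator or a Clifford torus. Since the min-max critical surface arises from a $5$-parameter family one expects index $\leq 5$, and the paper establishes this by exploiting the extra structure of the canonical family (the index bound is not available in Almgren--Pitts in general). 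Combined with $L([\Phi])>4\pi$, Urbano's theorem forces the min-max value to be exactly $2\pi^2$. Your proposal omits Urbano's theorem and the index estimate entirely, and without them the argument does not close.
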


It seems at first rather daring to have made such a conjecture after having it tested only for a very particular one-parameter family of tori. On the other hand, the torus that Willmore found is very special and had already appeared in geometry in disguised form.  It turns out that there exists   a stereographic projection  from the unit $3$-sphere $S^3\subset \mathbb{R}^4$ minus a point onto Euclidean space $\mathbb{R}^3$ that maps the Clifford torus $\hat{\Sigma}=S^1(\frac{1}{\sqrt 2})\times S^1(\frac{1}{\sqrt 2})$ onto $\Sigma_{\sqrt{2}}$. We will say more about the Clifford torus later.

In \cite{marques-neves}, we proved:
\begin{thm}\label{main.thm} Every embedded compact surface $\Sigma$ in $\R^3$ with positive genus satisfies
$$W(\Sigma)\geq 2\pi^2.$$
Up to rigid motions, the equality holds only  for  stereographic projections  of the Clifford torus (like $\Sigma_{\sqrt{2}}$).
\end{thm}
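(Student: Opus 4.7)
The plan is to work in $S^3$ rather than $\R^3$. A standard identity between the Euclidean Willmore energy and its $S^3$ counterpart (conformal invariance of $\int(H^2-K)\,d\mu$ together with Gauss--Bonnet) shows that for any closed $\Sigma\subset S^3$, the Willmore energy of its stereographic image in $\R^3$ equals $\int_\Sigma(1+H^2)\,d\mu$, with $H$ the mean curvature in $S^3$. Hence it suffices to prove that every embedded $\Sigma\subset S^3$ of positive genus satisfies
\[
\mathcal W(\Sigma):=\int_\Sigma (1+H^2)\,d\mu \ge 2\pi^2,
\]
with equality only if $\Sigma$ is, up to isometry of $S^3$, the Clifford torus $\hat\Sigma=S^1(1/\sqrt 2)\times S^1(1/\sqrt 2)$. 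Note that $\hat\Sigma$ is minimal with area $2\pi^2$, so equality does occur.

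The central construction is a canonical continuous $5$-parameter family of integral $2$-cycles $\{\Sigma_{v,t}\}$ parametrised by the $5$-cube $I^5=\bar B^4\times[-\pi,\pi]$. For $v\in B^4\subset\R^4$ let $F_v:S^3\to S^3$ be the standard conformal transformation with conformal centre $v$, and set $\Sigma_{v,t}=\{p\in S^3:\dist(p,F_v(\Sigma))=t\}$ with signed distance, so that $\Sigma_{v,0}=F_v(\Sigma)$. Embeddedness of $\Sigma$ makes these level sets rectifiable cycles, and the tube formula in $S^3$ combined with conformal invariance of $\mathcal W$ yields the crucial bound
\[
\area(\Sigma_{v,t})\le \mathcal W(F_v(\Sigma))=\mathcal W(\Sigma)\quad\text{for all }(v,t)\in I^5.
\]
The family is then continuously extended to $\partial I^5\cong S^4$: as $|v|\to 1$, $F_v$ concentrates $\Sigma$ near a single point of $S^3$, so $\Sigma_{v,t}$ degenerates to a geodesic $2$-sphere whose centre and radius depend continuously on $(v/|v|,t)$; at $t=\pm\pi$ the cycles collapse to zero. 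A careful analysis identifies the resulting boundary map with a degree-one map from $S^4$ onto the $4$-sphere of oriented totally geodesic $2$-spheres in $S^3$.

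Finally I would apply Almgren--Pitts min-max theory. The essential topological claim---where positive genus enters---is that $\{\Sigma_{v,t}\}$ is a non-trivial $5$-sweepout, i.e.\ pulls back the fifth power of the generator of $H^1(\mathcal Z_2(S^3;\Z_2);\Z_2)$ to a non-zero class in $H^5(I^5,\partial I^5;\Z_2)$. Min-max then produces a smooth closed embedded minimal surface $\tilde\Sigma\subset S^3$ of area at most $\mathcal W(\Sigma)$ and Morse index at most $5$. By Urbano's theorem such a surface is either a great $2$-sphere (area $4\pi$, index $1$) or the Clifford torus (area $2\pi^2$, index $5$); a homotopy argument using the degree-one boundary map rules out the great sphere, since if $\area(\tilde\Sigma)=4\pi$ the sweepout could be deformed into its boundary family of geodesic spheres, contradicting the non-trivial cohomology class. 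Hence $\area(\tilde\Sigma)=2\pi^2$ and $\tilde\Sigma$ is a single-multiplicity Clifford torus, proving $\mathcal W(\Sigma)\ge 2\pi^2$. Equality forces the supremum to be attained at an interior parameter $v_0\in B^4$, whence $F_{v_0}(\Sigma)=\hat\Sigma$ and $\Sigma$ is a conformal image of the Clifford torus. I expect the main obstacle to be precisely this non-triviality of the $5$-sweepout: one must simultaneously produce a continuous extension across the degenerations on $\partial I^5$ and verify that the resulting family has non-zero $5$-sweepout class---a null-homotopy does exist in the genus-zero case, so it is the positive-genus hypothesis that rules it out here.
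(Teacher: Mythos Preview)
Your strategy is essentially the paper's, but several of the key technical claims are misstated in ways that would break the argument if taken at face value.

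The most important correction concerns the boundary map. The space of oriented \emph{totally geodesic} $2$-spheres in $S^3$ is $S^3$ (a great sphere is determined by its center), not $S^4$; and the relevant map is $Q:\partial I^4\cong S^3\to S^3$ sending a boundary parameter to the center of the corresponding round sphere. Its degree is not $1$ but $g(\Sigma)$, the genus---this computation is precisely where the topology of $\Sigma$ enters, and it is why the whole scheme collapses when $g=0$. Relatedly, the boundary behaviour as $|v|\to 1$ is more delicate than you describe: the family $\{F_v(\Sigma)\}$ is genuinely \emph{discontinuous} at points $p\in\Sigma\subset\partial B^4$, because the limit depends on the angle of approach (one gets an entire pencil of spheres tangent to $\Sigma$ at $p$). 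The paper resolves this by a blow-up along $\Sigma$, reparametrising to obtain a continuous $\Phi:I^5\to\mathcal Z_2(S^3)$. Without this step there is no continuous sweepout to which min-max can be applied.

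Two further points. First, the paper does not use $p$-sweepouts and the cup-power of the generator of $H^1(\mathcal Z_2;\Z_2)$; that framework is from later work, and invoking it here would essentially require already knowing that the fifth width of $S^3$ is $2\pi^2$. Instead the min-max is taken over the relative homotopy class $[\Phi]$ with $\Phi|_{\partial I^5}$ fixed, and the strict inequality ${\bf L}([\Phi])>4\pi$ is proved by a homological argument in $H_3(\mathcal R;\Z)$ using $\deg Q\neq 0$. Second, the Morse-index bound you assume (``index at most $5$'') is \emph{not} available in Almgren--Pitts theory as used here; the paper explicitly flags this and instead extracts the needed structure from the canonical family to reach Urbano's dichotomy. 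Simply asserting the index bound is a gap.
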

The rigidity statement characterizing the equality case in Theorem \ref{main.thm} is optimal because stereographic projections are conformal and, as we have mentioned, the Willmore energy is conformally invariant.

Since Li and Yau \cite{li-yau} had proven in the $1980$s that compact surfaces with self-intersections have Willmore energy greater than or equal to $8\pi$, our result implies:
\begin{cor}The Willmore conjecture holds.
\end{cor}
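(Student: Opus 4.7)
The plan is to observe that the corollary follows immediately from the dichotomy between embedded and non-embedded immersions, combined with a numerical comparison. Given an arbitrary immersed compact surface $\Sigma$ in $\R^3$ of genus one (this is the class the conjecture concerns), I would split into two cases according to whether the immersion has self-intersections or not.

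In the first case, suppose the immersion is not an embedding, i.e.\ $\Sigma$ has at least one self-intersection point. Then the result of Li and Yau \cite{li-yau} cited just above gives $W(\Sigma)\geq 8\pi$. Since $8\pi > 2\pi^2$ (equivalently $4>\pi$, which is elementary), we conclude $W(\Sigma)\geq 2\pi^2$, with strict inequality in fact.

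In the second case, the surface is embedded and has positive genus (namely genus one). Theorem \ref{main.thm} then applies directly and yields $W(\Sigma)\geq 2\pi^2$. Combining the two cases covers every immersed torus in $\R^3$, which is exactly the content of Willmore's 1965 conjecture as stated above.

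There is essentially no obstacle here: the only thing to check is the inequality $8\pi>2\pi^2$, which is a one-line computation. All the real work has already been done, namely the Li--Yau bound for non-embedded surfaces and Theorem \ref{main.thm} for the embedded positive-genus case; the corollary is just the bookkeeping that assembles these two inputs.
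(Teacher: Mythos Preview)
Your argument is correct and is exactly the approach the paper takes: split immersed tori into the non-embedded case (handled by Li--Yau's $8\pi$ bound and $8\pi>2\pi^2$) and the embedded case (handled by Theorem~\ref{main.thm}). There is nothing to add.
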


\section{Some particular cases and related  results}\label{partial}

In this section we  survey  some  of the  results and techniques that have been used  in understanding the Willmore conjecture.  Our emphasis will be in giving a glimpse of the several  different ideas and approaches that have been used, instead of giving an exhaustive account of every result proven.

The richness of the problem derives partially from the fact that the Willmore energy is invariant under conformal maps. Since stereographic projections are conformal, one immediate consequence is that the conjecture can be restated for surfaces in the unit $3$-sphere $S^3$. Indeed, if $\Sigma$ is a compact surface in $S^3$ and $ \tilde \Sigma$ denotes its image in $\R^3$ under a stereographic projection, then one has
$$ W(\tilde \Sigma)=\int_{\Sigma}1+ \left(\frac{ k_1+ k_2}{2}\right)^2d\mu,$$
where $ k_1,  k_2$ are the principal curvatures of $\Sigma$  with respect to the standard metric on $S^3$. For this reason we take the right-hand side of the equation above as the definition of the Willmore energy   of $\Sigma \subset S^3$: $$\mathcal W(\Sigma)=\int_\Sigma (1+H^2) \,d\mu.$$

The Willmore conjecture can then be restated equivalently in the following form: 
\begin{conj}[Willmore, 1965] Every compact surface $\Sigma\subset S^3$ of genus one must satisfy
$$\mathcal W(\Sigma)\geq 2\pi^2.$$
\end{conj}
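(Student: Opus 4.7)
The plan is to prove the $S^3$ formulation via an Almgren--Pitts min-max for area with a 5-parameter sweepout, coupled with Urbano's classification of low-index minimal surfaces in $S^3$. To any embedded compact $\Sigma\subset S^3$ of positive genus I would associate a \emph{canonical family} of integral 2-cycles $\{\Sigma_{v,t}\}_{(v,t)\in \overline{B^4}\times[-\pi,\pi]}$. Here $v \in B^4$ parametrizes the conformal automorphism group of $S^3$ modulo isometries, identified with $\mathbb H^4$ in the ball model, producing conformal maps $F_v \colon S^3 \to S^3$; and $t$ is a signed-distance parameter, with $\Sigma_{v,t}$ the boundary of the geodesic $t$-neighborhood of $F_v(\Sigma)$. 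One arranges that at the boundary of the parameter space the $\Sigma_{v,t}$ degenerate to round geodesic 2-spheres (or to the empty/full cycle). A direct computation, using the conformal invariance of $\mathcal W$ together with the formula for areas of parallel surfaces in $S^3$, yields
$$\area(\Sigma_{v,t}) \leq \mathcal W(\Sigma) \qquad \text{for all } (v,t).$$

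Next, apply Almgren--Pitts min-max to the homotopy class $\Pi$ of this 5-parameter sweepout, viewed in the space $\mathcal Z_2(S^3;\mathbb Z_2)$ of mod-2 flat 2-cycles. By construction the width satisfies $L(\Pi) \leq \sup_{(v,t)} \area(\Sigma_{v,t}) \leq \mathcal W(\Sigma)$, and Pitts' regularity theorem, extended to the parametrized setting, yields a smooth closed embedded minimal surface $\Gamma \subset S^3$ with $\area(\Gamma) = L(\Pi)$ and Morse index at most 5. Urbano's theorem classifies such surfaces: any closed embedded minimal surface in $S^3$ of index $\leq 5$ is congruent either to a great 2-sphere (area $4\pi$, index $1$) or to the Clifford torus (area $2\pi^2$, index $5$). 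Provided the equatorial sphere alternative can be ruled out, this gives $\mathcal W(\Sigma) \geq L(\Pi) = 2\pi^2$; the equality statement then follows from the rigidity in Urbano's theorem together with the conformal invariance of $\mathcal W$.

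The main obstacle, and the real content of the argument, is the topological lower bound $L(\Pi) > 4\pi$: a priori the canonical family only sweeps out $S^3$ with degree one, yielding merely the weaker bound $\mathcal W(\Sigma) \geq 4\pi$. To force $L(\Pi) = 2\pi^2$ one must show that the sweepout represents a nontrivial class in the relative homotopy of $\mathcal Z_2(S^3;\mathbb Z_2)$ modulo its boundary values, and that it cannot be deformed rel boundary into the sublevel set $\{\area < 2\pi^2\}$. This is where the positive genus and embeddedness of $\Sigma$ must genuinely enter. My approach would be via an intersection / linking argument inside $\mathcal Z_2(S^3;\mathbb Z_2) \simeq \RP^\infty$ (Almgren's isomorphism): one shows that the canonical sweepout has nontrivial top-dimensional intersection with a suitable test family of cycles that detects the topology of $\Sigma$. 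Additional care is required to control the degeneration of $F_v$ as $v \to \partial B^4$, to verify that the boundary values of the family sit in the $4\pi$-sublevel set of round spheres, and to adapt Pitts' regularity machinery to this 5-parameter setting with prescribed boundary.
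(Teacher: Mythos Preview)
Your overall strategy matches the paper's: build the canonical family from conformal dilations and signed-distance surfaces, bound area by $\mathcal W(\Sigma)$ via the Heintze--Karcher-type inequality, run Almgren--Pitts min-max, and invoke Urbano. But the two steps you pass over are exactly where the paper's content lies.

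First, the index bound. You write that Pitts' theory ``yields a smooth closed embedded minimal surface $\Gamma$ with \ldots\ Morse index at most $5$.'' It does not: no index estimate is available in Almgren--Pitts min-max as it stands. The paper says this explicitly and instead exploits the specific structure of the canonical family to extract the index information needed to apply Urbano. Taking the index bound as a black-box consequence of Pitts is a genuine gap.

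Second, the topological lower bound $L(\Pi)>4\pi$. Your proposed route---an intersection/linking argument in $\mathcal Z_2(S^3;\Z_2)\simeq\RP^\infty$ via Almgren's isomorphism---is not how the genus enters in the paper, and it is not clear it can be made to work: the homotopy of $\RP^\infty$ is concentrated in $\pi_1$, so a $5$-parameter relative class is not detected there in any obvious way. The paper's mechanism is concrete and different. After the blow-up reparametrization, the restriction $\Phi|_{\partial I^4\times\{1/2\}}$ lands in the space $\mathcal R\approx S^3$ of oriented great spheres, via a center map $Q:\partial I^4\to S^3$. The crucial computation is that $\deg Q$ equals the genus of $\Sigma$; this is precisely where positive genus is used. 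One then argues by contradiction: if $L([\Phi])=4\pi$, every path from the bottom face to the top face of $I^5$ must meet $\mathcal R$, producing a $4$-chain $R\subset I^5$ with $\partial R=\partial I^4\times\{1/2\}$ and $\Psi(R)\subset\mathcal R$, which forces $\deg Q=0$. Note also that this degree argument needs integer (not $\Z_2$) coefficients; with $\Z_2$ cycles you would only see the genus mod $2$, which happens to suffice for tori but not for the general positive-genus statement the paper proves.
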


We need to introduce  an important  class of surfaces that plays  a central role in understanding the Willmore energy, as we will see later. This is the class of {\em minimal surfaces}, 
defined variationally as surfaces that are stationary configurations for the area functional, i.e., those for which the first  derivative of the area is zero with respect to any variation. 

These surfaces are characterized by the property that their mean curvature $H$ vanishes identically.
Hence  it follows immediately  from the expression for $\mathcal W(\Sigma)$ that the Willmore energy of a minimal surface in $S^3$ is equal to its area. The equators (or great spheres)  are the simplest examples of minimal surfaces in $S^3$, with area $4\pi$, while the Clifford torus  is a minimal  surface in $S^3$ with area $2\pi^2$. Note that this is compatible with the fact that $\Sigma_{\sqrt{2}}$ is a stereographic projection of the Clifford torus and $W(\Sigma_{\sqrt{2}})=2\pi^2$.  There are infinitely many known compact minimal surfaces in $S^3$. For instance Lawson \cite{lawson70} in the $1970$s found embedded orientable minimal surfaces  in $S^3$ of any genus. 

Given that the Willmore conjecture was initially  tested only for a very particular set of tori in $\R^3$, the first wave of results consisted in testing the conjecture on larger classes.  Willmore himself  in $1971$ \cite{willmore71}, and independently Shiohama and Takagi \cite{shiohama-takagi}, verified the conjecture   for tubes of constant radius around a space curve $\gamma$ in $\R^3$. An explicit computation gives that such a torus must satisfy
$$ W(\Sigma)\geq \pi\int_{\gamma}|k|\,ds, $$
where $|k|\geq 0$ is the curvature of the space curve $\gamma$. Hence the result follows from  Fenchel's Theorem \cite{fenchel}, which establishes $\int_{\gamma}|k| \,ds\geq 2\pi.$

In 1973, Chen \cite{chen} checked that every intrinsically flat torus in $S^3$ has Willmore energy  greater than or equal to $2\pi^2$, with equality only for the Clifford torus. The inverse image  under the Hopf map $S^3 \rightarrow S^2$  of a closed curve in $S^2$  is a flat torus in $S^3$ and thus such examples abound. Chen's Theorem  follows from integral geometric arguments that we quickly describe. Given a surface $\Sigma$ in $\R^{k+2}$ with unit normal bundle $B$, we have the generalized Gauss map 
$$\mathcal G:B\rightarrow S^{k+1}=\{v\in \R^{k+2}: |v|=1\}.$$
The {\em total curvature} $\tau_k(\Sigma)$  is defined as the total $(k+1)$-volume parametrized by $\mathcal G$ divided by the volume of $S^{k+1}$.
Chern-Lashof \cite{chern} showed that the total curvature is equal to the average  number of critical points of a linear function on $\Sigma$. Therefore every torus has total curvature greater than or equal to $4$. Through  purely  local computations, Chen showed that if a torus $\Sigma\subset S^3\subset \R^4$ is flat, then 
$$\mathcal W(\Sigma)\geq  \frac{\pi^2}{2}\tau_2(\Sigma),$$ 
that combined with the Chern-Lashof inequality implies the result.  Later he extended  this result to include flat tori in the unit $n$-sphere $S^n$ (\cite{chen2}).  See \cite{ammann} and \cite{topping} for other related results.

In $1976$, Langevin and Rosenberg \cite{langevin-rosenberg} showed that the Willmore energy of a knotted torus in $\R^3$ is at least  $8\pi$. The key estimate was to show that if the torus is knotted  then every linear function has at least $8$ critical points. Hence
$$\frac{1}{4\pi}\int_{\Sigma}|K|d\mu=\frac{1}{2}\tau_1(\Sigma)\geq 4,$$
where the first identity follows from the definition of total curvature and the fact that $K=\det(dN)$, while the inequality follows from the  Chern-Lashof result. Therefore, since $\int_\Sigma K d\mu=0$ by  the Gauss--Bonnet Theorem, we have
$$ W(\Sigma)\geq\int_{\{K\geq 0\}} \left(\frac{k_1+k_2}{2}\right)^2d\mu\geq \int_{\{K\geq 0\}}Kd\mu
=\frac{1}{2}\int_{\Sigma}|K|d\mu\geq 8\pi.
$$
This result is reminiscent of the Fary-Milnor Theorem \cite{fary,milnor}, which states that  the total curvature $\int_C |k|\, ds$ of a knotted closed curve $C$ in $\R^3$ must exceed $4\pi$.

In 1978, Weiner  \cite{weiner} checked that the Clifford torus is a critical point with non-negative second variation of the Willmore energy. If this were not the case there would be some small perturbation of the Clifford torus with strictly smaller energy, contradicting the conjecture.

In $1982$,  Li and Yau \cite{li-yau} were the first to exploit in a crucial way the conformal invariance of the problem. They introduced the important notion of {\it conformal volume} of an immersion $\phi: \Sigma \rightarrow S^n$:
$$
V_c(n,\phi)=\sup_{g \in Conf(S^n)} {\rm area}(g\circ \phi),
$$
  and obtained various striking results. Their ideas, that we describe below, had a lasting impact in Geometry. The results are valid when the ambient space is the unit $n$-sphere but we restrict ourselves to $S^3$ for simplicity. 

The conformal group of $S^3$, modulo isometries, is parametrized by the unit $4$-ball $B^4$:  for each $v\in B^4$ we associate  the conformal map
\begin{equation}\label{conformal.maps}
F_v:S^3 \rightarrow S^3, \quad F_v(x) = \frac{(1-|v|^2)}{|x-v|^2}(x-v) -v.
\end{equation}
 The map $F_0$ is the identity and, for $v\neq 0$, $F_v$ is a conformal dilation with fixed points $v/|v|$ and $-v/|v|$. 
 
 For illustrative purposes, we note  that if $B$ is a geodesic ball in $S^3$ and $p\in S^3$, then  $F_{tp}(B)$ is a geodesic ball that,  as $t<1$ tends to $1$, could have three types of behavior. It converges   to the whole  $3$-sphere if $p$ is inside $B$ , to the antipodal point $-p$ if $p$ is outside the closure of $B$, or to the  hemisphere touching $\partial B$ tangentially at $p$ in case $p$ is in the boundary of $B$.
 
 Using these transformations, Li and Yau showed that if a surface $\Sigma$ contains  a $k$-point  $p$, i.e.,  if nearby $p$  the surface   looks like $k$ small discs containing $p$, then $\mathcal W(\Sigma)\geq 4\pi k.$  The proof goes as follows: as $t<1$ tends to $1$,  $F_{tp}(\Sigma)$ converges to a union of $k$ great spheres (boundaries of hemispheres). Since
 $$
 \mathcal W(\Sigma)=\mathcal W(F_{tp}(\Sigma))\geq {\rm area}(F_{tp}(\Sigma)),
 $$ by taking the limit as $t\rightarrow 1$ we get 
 \[\mathcal W(\Sigma)\geq {\rm area}(k\mbox{ great spheres} )=4\pi k.\]
%where the first identity follows from the conformal invariance of Willmore energy, the second identity follows from lower %semicontinuity of the energy, and the third identity is a straightforward computation.

One consequence of this result is that the energy of every compact surface is at least $4\pi$ (a result we already mentioned). It also follows that the energy of any compact surface that is not embedded, hence contains a double-point at least, must be greater than or equal to $8\pi$. In particular,  one can restrict to the class of  embedded tori in order to prove the Willmore conjecture.

Another consequence is that every immersed projective plane in $\R^3$ must have Willmore energy greater than 
or equal to $12\pi$. This is because such projective planes always contain   a triple-point at least. Bryant \cite{bryant} and Kusner \cite{kusner} found projective planes in $\R^3$ with Willmore energy exactly equal to $12\pi$. All such projective planes were classified   in \cite{bryant}.

In that same paper, Li and Yau also found a region $R$ (see below for an explicit description) in the space of all conformal structures so that if the conformal class of a torus $\Sigma\subset S^3$ lies in $R$, then $\mathcal  W(\Sigma)\geq 2\pi^2$. The conformal class of the Clifford torus (square lattice) is in the boundary of $R$.
This result relates in an ingenious way the  conformal invariance of the Dirichlet energy $\int_\Sigma |\nabla f|^2\, d\mu$ in dimension two with the conformal invariance of the Willmore energy. We discuss briefly  the main ideas.

Given a torus $\Sigma\subset S^3$, the Uniformization Theorem tells us that {$\Sigma$ is diffeomorphic to $\R^2/ \Gamma$, where $\Gamma$ is some lattice in $\R^2$, and that }the induced metric {$g$ on $\Sigma$ is  conformal to the Euclidean metric  $g_0$ on $\R^2/ \Gamma$}.  The lattice $\Gamma$ can be chosen to be generated by  the vectors $(1,0)$ and $(x,y)$, with $0\leq x\leq 1/2$, $y\geq 0$ and $x^2 + y^2 \geq 1$.

Recall that the first nontrivial eigenvalue of the Laplacian  {with respect to $g_0$} is given by
\begin{equation}\label{lambda}
\lambda_1 = \inf_{\int_\Sigma f \, d\mu_{g_0}=0, f\neq 0} \frac{\int_\Sigma |\nabla_{g_0} f|^2\, d\mu_{g_0}}{\int_\Sigma f^2 \, d\mu_{g_0}}.
\end{equation}
Li and Yau first prove, through a degree argument, that $\Sigma$ can be balanced by applying a conformal transformation, i.e. that there exists $v_0\in B^4$ such that $\int_\Sigma x_i \circ F_{v_0} \, d\mu_{g_0}=0$ for every $i=1,\dots,4$.
By evaluating the quotient in the right hand side of equation \eqref{lambda} with {$f= x_i \circ F_{v_0}$}, summing over $i=1,\dots,4$ and using the conformal invariance of the Dirichlet energy in dimension two, they show that
{\begin{align*}
\lambda_1 {\rm area}(\R^2/ \Gamma, g_0)& =\lambda_1 \sum_{i=1}^4 \int_\Sigma (x_i \circ F_{v_0})^2 \, d\mu_{g_0}\leq  \sum_{i=1}^4 \int_\Sigma |\nabla_{g_0}(x_i \circ F_{v_0})|^2 \, d\mu_{g_0} \\
 &=\sum_{i=1}^4 \int_\Sigma |\nabla_{g}(x_i \circ F_{v_0})|^2 \, d\mu_{g}=2 \,{\rm area}(F_{v_0}(\Sigma)).
\end{align*}}
Using the conformal invariance of the Willmore energy we have,
$${{\rm area}(F_{v_0}(\Sigma)) \leq \mathcal W(F_{v_0}(\Sigma))=\mathcal W(\Sigma)},$$
where the first inequality comes from the expression of the Willmore energy in $S^3$. Putting these two inequalities together Li and Yau obtained that
$${\lambda_1 {\rm area}(\R^2/ \Gamma, g_0)\leq 2\,\mathcal W(\Sigma)}.$$
The left-hand side of the above inequality can be computed for every conformal class of the torus. It turns out that if $\Gamma$ is in the    set $R$ of  lattices  such that the generators $(1,0)$ and $(x,y)$ satisfy the additional assumption $y\leq 1$, besides 
$0\leq x\leq 1/2$, $y\geq 0$ and $x^2 + y^2 \geq 1$, then $$4\pi^2\leq \lambda_1 {\rm area}(\R^2/ \Gamma, g_0).$$
This finishes  Li-Yau's proof that $\mathcal{W}(\Sigma) \geq 2\pi^2$ when the conformal class of $\Sigma$ lies in $R$. In 1986, Montiel and Ros \cite{montiel-ros}   found a larger set of lattices, still containing the square lattice on the boundary,  for which the Willmore conjecture holds. 

%Finally, Li and Yau extended these ideas and showed that the Willmore energy of any flat torus in $S^n$ is never %smaller than $2\pi^2$, a result which had been previously proven by Chen.

In $1984$, Langer and Singer \cite{langer-singer} showed that the energy of every torus of revolution (with possibly noncircular section) in space is  greater than or equal to $2\pi^2$, with equality only for the Clifford torus and dilations of it. The basic fact, observed independently by Bryant and Pinkall, is that if $\gamma$ is a closed curve in the upper half-plane $P=\{(x,y,0):y>0\}$, and $\Sigma$ is the torus obtained by revolving $\gamma$ around the $x$-axis,  then $$W(\Sigma)=\frac{\pi}{2}\int_{\gamma}k_{-1}^2ds,$$
 where $k_{-1}$ is the geodesic curvature of $\gamma$ computed with respect to the {\em hyperbolic metric} on $P$.  Langer and Singer showed that  every regular closed curve in the hyperbolic plane satisfies $\int_{\gamma}k_{-1}^2ds\geq 4\pi$, and  the inequality follows. Later, Hertrich-Jeromin and Pinkall \cite{pinkall2} extended this computation to a larger class of tori ({\em Kanaltori} in German), for which the Willmore energy is still given by a line integral.

The variational problem associated to the Willmore energy is extremely interesting, and many solutions are known that are not of minimizing type. Surfaces that constitute critical points of the Willmore energy are called {\em Willmore surfaces} (they were previously called {\it conformal minimal surfaces} by Blaschke). The Euler-Lagrange equation 
for this variational problem, attributed by Thomsen \cite{thomsen} to Schadow, is of fourth order and is the same for surfaces in $\R^3$ or $S^3$:
$$\Delta H + \frac{(k_1-k_2)^2}{2}H=0.$$
  In particular, minimal surfaces are always  Willmore and therefore Lawson's minimal surfaces in $S^3$ provide examples of compact embedded surfaces of any genus that are stationary for the Willmore functional.

 A classification of all Willmore spheres was achieved in a remarkable work of  Bryant \cite{bryant2, bryant}, who  exploited with significant creativity the conformal invariance of the problem. He discovered that their Willmore energies are always of the form $4\pi k$, with $k\in \N\setminus\{2, 3, 5,7\}$ (see also \cite{peng}), and that the only embedded ones are the round spheres. 
 
 This result had a great impact,   so we briefly describe it. Motivated by his finding that the conformal Gauss map of a  Willmore surface is a conformal harmonic map, Bryant constructed a quartic holomorphic differential on any such surface. This holomorphic differential must  vanish on a topological sphere. Bryant used this fact to show that a Willmore sphere must be the conformal inversion of some minimal surface in $\R^3$ with finite total curvature and embedded ends. It follows from the theory of these minimal surfaces that the Willmore energy of the sphere has to be a multiple of $4\pi$.  Finally, Bryant reduced the problem of  finding all possible such minimal immersions to a problem in algebraic geometry concerning zeros and poles of meromorphic maps on $S^2$, from which he derived his classification.  Ejiri, Montiel, and Musso \cite{ejiri, montiel, musso} extended  this work and classified Willmore spheres in $S^4$.

Up to this point, every known Willmore surface was the conformal image of some minimal surface in $\R^3$ or $S^3$. Pinkall \cite{pinkall} found in $1985$ the first examples of embedded Willmore tori that are not of this type.  His idea was to look at the inverse image under the Hopf map $\pi$ of closed curves $\gamma$ in $S^2$. The Willmore energy is given by
$$\mathcal W(\pi^{-1}(\gamma))=\pi\int_{\gamma}1+k^2\, ds,$$
where $k$ is the geodesic curvature of $\gamma$. Langer and Singer \cite{langer-singer2} had shown that there are infinitely many simple closed curves that are critical points of the  functional in the right-hand side (these are called elastic curves). Pinkall argued that the inverse image of an elastic curve is a  Willmore torus,  and that among those the only one that is  conformal to a minimal surface in $S^3$  is the Clifford torus (the inverse image of an equator in $S^2$).  Finally, if any of the Pinkall tori were to be  the conformal image of some minimal surface $S$ in $\R^3$, the embeddedness of the torus would imply that  $S$ had to be a nonplanar minimal surface asymptotic to a plane. These surfaces  do not exist. Later, Ferus and Pedit \cite{ferus-pedit} found more examples of Willmore tori.

In $1991$, the biologists Bensimon and Mutz \cite{bensimon} (see also \cite{bensimon2}), experimentally verified  the Willmore conjecture with the aide of a microscope while studying the physics of membranes!  They produced toroidal vesicles in a laboratory and observed that their  shape, which according to the Helfrich model should approach the minimizer for the Wilmore energy, was the one predicted by Willmore or one of its conformal images.

The existence of a torus that minimizes the Willmore energy among all tori was proven by Simon \cite{simon93} in $1993$. This result was obtained through a technical tour de force and many of the  ideas involved are now widely used in Geometric Analysis. Very briefly, Simon picked a sequence of tori whose energies converge to the least possible value and showed the existence of  a limit in some weak measure theoretic sense. Exploiting with great effectiveness the fact that the  tori in the sequence are embedded (otherwise the energy would be at least $8\pi$), he obtained that the weak limit must be  a smooth embedded surface. A serious difficulty in accomplishing this comes from the conformal invariance of the problem. For instance, one could start with  a minimizing torus  and apply a sequence of conformal maps so that the images look like  some round sphere with increasingly smaller handles attached. In the limit one would obtain a  round sphere  instead of a torus. To overcome this, Simon showed that every torus can be corrected by applying  a carefully chosen conformal map so that it becomes far away in Hausdorff distance from all round spheres. This way he is sure that in his minimization process he will get a limiting surface that is a torus.

More generally, let $\beta_g$ denote the infimum of the Willmore energy among all  orientable compact surfaces of genus $g$.  It was independently observed by Kusner and Pinkall  (see \cite{kuhnel-pinkall}, \cite{kusner89}, \cite{simon93}) that  a Lawson minimal surface of genus $g$, for every $g$, has area strictly smaller than $8\pi$. This implies   $\beta_g<8\pi$. Later Kuwert, Li and Sch\"{a}tzle \cite{kuwert-li-schatzle} showed that $\beta_g$ tends to $8\pi$ as $g$ tends to infinity.

It is natural to ask whether there exists a  genus $g$ surface with energy $\beta_g$. 
Note that for every partition $g=g_1+\ldots+g_k$ by integers $g_i\geq 1$, one has 
$$\beta_g\leq \beta_{g_1}+\ldots+\beta_{g_k}-4(k-1)\pi.$$
In order to see this take $\Sigma_i$ to be a surface of genus $g_i$ with energy arbitrarily close to $\beta_{g_i}$, $i=1,\ldots,k$. Apply a conformal map to $\Sigma_i$ to get a surface $\tilde{\Sigma}_i$ that can be decomposed into two regions: one that looks like a round sphere of radius one minus a small spherical cap and another one that contains $g_i$ small handles. Remove the handle regions for $i\geq 2$ and sew them into $\tilde{\Sigma}_1$ to get a surface with $g$ handles and energy close to $\beta_{g_1}+\ldots+\beta_{g_k}-4(k-1)\pi$. Simon \cite{simon93}  showed that $\beta_g$ is  indeed attained provided there is no partition $g=g_1+\ldots+g_k$ by integers $g_i\geq 1$ with $k\geq 2$ such that each $\beta_{g_i}$ is attained and
\begin{equation}\label{douglas}
\beta_g=\beta_{g_1}+\ldots+\beta_{g_k}-4(k-1)\pi.
\end{equation}

Bauer and Kuwert \cite{bauer-kuwert}, inspired by Kusner \cite{kusner96}, showed that such partitions do not exist, hence
$\beta_g$ can be realized by a surface of genus $g$ for all $g$. More precisely, they show that if $M$ with genus $h$ and $N$ with genus $s$ realize $\beta_h$ and $\beta_s$, respectively, then  a careful connected sum near  non-umbilic points of $M$ and $N$ produces a surface with genus $g=h+s$ and Willmore energy {\em stricly} smaller than $\beta_h+\beta_s-4\pi$. This suffices to show that \eqref{douglas} never occurs.

%Simon \cite{simon93}  showed that $\beta_g$ is  indeed attained provided pinching handles in a genus $g$ surface requires %an energy that is a definite amount above $\beta_g$, i.e., provided the following does not hold: There is a partition  $g=g_1+
%\ldots+g_k$ with $g_i\geq 1$ integers so that each $\beta_{g_i}$ is attained and

Incidentally, one immediate consequence of  our Theorem \ref{main.thm} is that $\beta_g\geq 2\pi^2$ for all $g\geq 1$. Since  we also have $\beta_g<8\pi$, it is not difficult to see that partitions as above can never occur. 

%we obtain that  $\beta_{h+s}= \beta_h+\beta_s-4\pi$ cannot happen, thus giving an alternative reason why \eqref{douglas} can %never occur. 

In 1999, Ros \cite{ros} proved the Willmore conjecture for tori in $S^3$ that are preserved by the antipodal map. (This also follows from the work of Topping \cite{topping} on integral geometry.) More precisely, Ros showed that every orientable surface in  the projective space $\RP^3$ has Willmore energy greater than or equal to $\pi^2$.  His approach was quite elegant and based on the fact, proven by Ritor\'{e} and Ros \cite{ritore-ros}, that  of all  surfaces which  divide $\RP^3$ into two pieces of the same volume, the Clifford torus is the one with least area (which in this case is $\pi^2$). The result follows because he also proved that the surface induced in $\RP^3$ by an antipodally-symmetric surface of odd genus in $S^3$ is necessarily orientable.

We now describe the method of Ros because it was inspirational in our approach. An orientable surface $\Sigma$ in $\RP^3$  must be the boundary of a region $\Omega$, which we can choose so  that the volume of $\Omega$ is not bigger than half the volume of $\RP^3$. Next, Ros looked at the region $\Omega_t$ of points that are at a  distance at most $t$ from $\Omega$. For very large  $t$,  $\Omega_t$ will be the whole $\RP^3$ and thus there must exist some $t_0$ so that the volume of $\Omega_{t_0}$ is equal to half the total volume. The result we mentioned in the previous paragraph implies that ${\rm area}(\partial \Omega_{t_0})\geq \pi^2$. Finally, Ros observed that
\begin{equation}\label{heintze}
\mathcal W(\Sigma)\geq {\rm area}(\partial \Omega_t)\quad\mbox{for all }t\geq 0
\end{equation}
and thus $\mathcal W(\Sigma)\geq \pi^2$, as he wanted to show.  The above inequality is a consequence of more general inequalities due to Heintze and Karcher \cite{heintze-karcher}, and it plays a crucial role in our method  as well.

We briefly sketch its proof. If $N$ is the normal vector of $\Sigma$ that points outside $\Omega$, we can consider the map $$\psi_t:\Sigma\rightarrow \RP^3, \quad \psi_t([x])=[ \cos tx+\sin t N(x)].$$
This map is well defined independently of the representative, $x$ or $-x$, we choose for $[x]$. Since $\partial \Omega_t\subset \psi_t(\Sigma)$, we have that
$${\rm area}(\partial \Omega_t)\leq \int_{\{{\rm Jac\,}\psi_t\geq 0\}} {\rm Jac }\,\psi_t d\mu.$$
Denoting by $k_1,k_2$ the principal curvatures of $\Sigma$ with principal directions $e_1,e_2$, respectively, so that $D_{e_i}N=-k_ie_i,$ $i=1,2$,  we have 
\begin{align*}
{\rm Jac }\,\psi_t &=(\cos t-\sin tk_1)(\cos t-\sin t k_2)\\
 &=\cos^2 t +\sin^2 tk_1k_2-\sin t\cos t (k_1+k_2)\\
 & \leq \cos^2 t +\frac{1}{4}\sin^2t (k_1+k_2)^2-\sin t\cos t (k_1+k_2)\\
 &= \left(\cos t-\frac{k_1+k_2}{2}\sin t\right)^2\leq 1+ \left(\frac{k_1+k_2}{2}\right)^2.
\end{align*}
Therefore
\begin{multline*}
{\rm area}(\partial \Omega_t) \leq \int_{\{{\rm Jac\,}\psi_t\geq 0\}} {\rm Jac }\,\psi_t d\mu\leq \int_{\{{\rm Jac\,}\psi_t\geq 0\}} 1+ \left(\frac{k_1+k_2}{2}\right)^2 d\mu
\leq \mathcal W(\Sigma).
\end{multline*}

One year later, Ros \cite{ros2001} used again the area inequality \eqref{heintze}  to show that any odd genus surface in $S^3$ invariant  under the mapping $(x_1,x_2,x_3,x_4)\mapsto (-x_1,-x_2,-x_3, x_4)$ must have Willmore energy greater than or equal to  $2\pi^2$. 

%In the same year, Topping \cite{topping} used integral geometry methods to give an alternative proof of the Willmore %conjecture in $\RP^3$. 

Finally, we mention that the understanding of the analytical aspects of the Willmore equation has  been greatly improved in recent years thanks primarily to the works of Kuwert, Sch\"{a}tzle (e.g. \cite{kuwert-schatzle}) 
and Rivi\`ere
(e.g. \cite{riviere}). In \cite{kuwert-schatzle},  Kuwert and Sch\"{a}tzle  analyze isolated singularities of Willmore surfaces in codimension one and prove, as a consequence, that the Willmore flow (the $L^2$ negative gradient flow of the Willmore energy) of a sphere with energy less than $8\pi$ exists for all time and becomes round. This uses a blow-up analysis of possible singular behavior of the flow and Bryant's classification of Willmore spheres in codimension one.  In \cite{riviere}, Rivi\`{e}re derives a general weak formulation of the Willmore Euler-Lagrange equation in divergence form, in any codimension,  and proves regularity of weak solutions. 
He also extends the analysis of point singularities of \cite{kuwert-schatzle} to the higher codimension case.   Here is a list of some other current topics of interest, on which there is a lot of activity today: compactness results (modulo the M\"{o}bius group) and blow-up analysis of  Willmore surfaces, existence of minimizers of the Willmore functional under different constraints (fixed topology, conformal class, isoperimetric ratio), study of Willmore-type functionals in Riemannian manifolds among others. This is a fast paced area with many developments currently taking place and so, instead of listing them exhaustively, we point the reader to the   survey article on the Willmore functional of Kuwert and Sch\"{a}tzle \cite{kuwert-schatzle-survey},  and to the lecture notes of Rivi\`ere \cite{riviere-notes} where an introduction to the analysis of conformally invariant variational problems is provided.

\section{Min-max approach}\label{approach}
We now describe the approach we used to solve the Willmore conjecture. As in the previous sections, we appeal to intuition so that we can emphasize the geometric nature of the arguments.

%\subsection{Min-max Theory}

One purpose of the min-max technique is  to find unstable critical points of  a given functional, i.e., critical points that are not of minimum type. 
For example, consider the surface $M=\{(x,y,z)\in \R^3: z=x^2-y^2\}$ and  the height function $f(x,y,z)=z$. Then $(x,y,z)=(0,0,0)$ is a critical point of $f$ of saddle type. It turns out that it is possible to detect this critical point by a variational approach. 
The idea is to  fix a continuous path $\gamma:[0,1]\rightarrow M$  connecting $(0,1,-1)$ to $(0,-1,-1)$ and set $[\gamma]$ to be the collection of all continuous paths $\sigma:[0,1]\rightarrow M$ that are homotopic to  $\gamma$ with fixed endpoints. We  define 
$$L=\inf_{\sigma\in [\gamma]}\max_{0\leq t\leq 1} f(\sigma(t)).$$

The projection on the $xy$-plane of any path  in $[\gamma]$  has to intersect the diagonal  line $\{(t,t,0)\in \R^3:t\in \R\}$, where $f$ vanishes.  Hence $L\geq 0$. But considering $\sigma(t)=(0,1-2t,-(1-2t)^2)$, $0\leq t\leq 1$, we see that $$0=f(0,0,0)=f(\sigma(1/2))=\max_{0\leq t\leq 1} f(\sigma(t)).$$
Hence $L=0$.
The tangential projection of $\nabla f_{|(0,0,0)}$  on $M$  has to vanish  because otherwise we would be able to perturb the path $\sigma$ near the origin and obtain another path $\bar\sigma$ in $[\gamma]$ with $\max_{0\leq t\leq 1} f(\bar \sigma(t))<f(0,0,0)=0$. This is impossible because $L=0$, and so we have found a critical point of $f$ restricted to $M$ via a variational method.

Note that the same reasoning implies that the index of the critical point has to be less than or equal to one because otherwise the origin would be a strict local maximum for $f$. Again we would be able to perturb $\sigma$ to obtain another   $\bar\sigma$ in $[\gamma]$ with $\max_{0\leq t\leq 1} f(\bar \sigma(t))<f(0,0,0)=0$.

Almgren \cite{almgren-varifolds} in the $1960$s developed a  Min-max Theory for the area functional.  His motivation was to produce minimal surfaces (which are critical points for the area functional, as we have mentioned before). The techniques he used come from the field of Geometric Measure Theory, but we will try to keep the discussion with as little technical jargon as possible. This min-max theory applies to  more general ambient manifolds,  but we restrict  to the case of  the round $3$-sphere for simplicity.

Denote by $\mathcal Z_2(S^3)$ the space of integral 2-currents with boundary zero, which can be thought of intuitively as the space of oriented Lipschitz closed surfaces in $S^3$, with integer multiplicities. For instance, the boundary $\partial U$ of any open set $U$ of finite perimeter is a well-defined element of $\mathcal Z_2(S^3)$. This space is endowed with the flat topology, according to which two surfaces are close to each other if the volume of the region in between them is very small. 

  Let  $I^k=[0,1]^k$ be the unit $k$-dimensional cube. We will consider  continuous functions $\Phi:I^k\rightarrow \mathcal Z_2(S^3)$, and denote by $[\Phi]$ the set of all continuous maps $\Psi:I^k\rightarrow\mathcal Z_2(S^3)$ that are homotopic to $\Phi$ through homotopies that fix the maps on $\partial I^k$. We then define
$${\bf L}([\Phi])=\inf_{\Psi\in [\Phi]}\sup_{x\in I^k} {\rm area}(\Psi(x)).$$
The prototype theorem was proven by Pitts \cite{pitts}  (see also \cite{colding-delellis} for a nice exposition), a student of Almgren at the time, and states the following.

\begin{thm}[Min-max Theorem] Suppose $${\bf L}([\Phi])>\sup_{x\in \partial I^k}{\rm area}(\Phi(x)).$$ 
Then there exists a disjoint collection of smooth, closed, embedded  minimal  surfaces $\Sigma_1,\dots,\Sigma_N$ in $S^3$  such that
$${\bf L}([\Phi])=\sum_{i=1}^N m_i\,{\rm area}(\Sigma_i),$$
for some positive integer multiplicities $m_1,\dots,m_N$.
\end{thm}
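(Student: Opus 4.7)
My plan is to follow Almgren's pull-tight argument combined with Pitts's almost-minimizing regularity theory, sharpened in dimension three by the Schoen-Simon curvature estimates for stable minimal surfaces. The starting point is a minimizing sequence $\Psi_j \in [\Phi]$ with $\sup_{x \in I^k} \mathrm{area}(\Psi_j(x)) \to \mathbf{L}([\Phi])$. The strict inequality $\mathbf{L}([\Phi]) > \sup_{\partial I^k}\mathrm{area}(\Phi)$ guarantees that the nearly-maximal slices occur in the interior of $I^k$, so the nontrivial structure cannot escape into the (fixed) boundary. The set of varifold limits of sequences $\Psi_j(x_j)$ with $\mathrm{area}(\Psi_j(x_j)) \to \mathbf{L}([\Phi])$ — the critical set of the sequence — is nonempty and compact by the standard compactness of integral varifolds with bounded mass.

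The first step is Almgren's pull-tight procedure: replace $\Psi_j$ by a homotopic family for which every varifold $V$ in the critical set is stationary. Were some $V$ not stationary, there would be a smooth vector field $X$ on $S^3$ whose flow strictly decreases $\|V\|$; deploying $X$ locally in the parameter space near the offending slices, and cutting off where the slices are already close to stationary, yields a homotopic family of strictly smaller sup-area and contradicts the minimizing property. A careful continuity argument ensures the modification can be done simultaneously over all of $I^k$ without raising the sup by more than a controlled amount.

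The core difficulty — and the step I expect to be the main obstacle — is upgrading stationarity to regularity of some such limit $V$. For this one refines the sequence further, via a combinatorial discretization on a fine grid in $I^k$ with successive interpolations, to extract a max-slice limit that is \emph{almost minimizing in every sufficiently small annulus}: no isotopy supported in a small annulus around any $p \in S^3$ can decrease mass by a definite amount without, along the way, increasing it by more. This property forces $V$, restricted to each such annulus, to arise as a limit of boundaries of local area-minimizers. The codimension-one regularity theory of De Giorgi-Federer-Fleming then renders $V$ smooth on each annulus, and the Schoen-Simon curvature estimates upgrade this to a stable smooth embedded structure with integer multiplicity. Exhausting $S^3$ by overlapping annuli around each point, together with the removability of isolated point singularities for smooth two-dimensional minimal surfaces, shows that $\mathrm{supp}(V)$ is a smooth closed embedded minimal surface of $S^3$.

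Finally, decomposing the resulting stationary integral varifold into its connected components via the constancy theorem gives $V = \sum_{i=1}^N m_i\,[\Sigma_i]$, with the $\Sigma_i$ pairwise disjoint, smooth, closed, embedded minimal surfaces and with positive integer multiplicities $m_i$; by construction the mass of $V$ equals $\mathbf{L}([\Phi])$, which is the stated identity.
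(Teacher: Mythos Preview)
The paper does not supply a proof of this theorem: it is stated as a result of Pitts \cite{pitts} (building on Almgren \cite{almgren-varifolds}), with a pointer to Colding--De~Lellis \cite{colding-delellis} for exposition, and is then used as a black box. Your proposal is a faithful high-level outline of precisely that Almgren--Pitts argument --- minimizing sequence, pull-tight to force stationarity of the critical set, Pitts's combinatorial refinement to extract an almost-minimizing-in-annuli varifold, and regularity in ambient dimension three via the Schoen--Simon curvature estimates --- so there is nothing to compare: you are sketching the proof the paper cites rather than an alternative to it.

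As a sketch it is accurate in broad strokes; the genuinely delicate parts you acknowledge but do not carry out (the discretization/interpolation needed to produce an almost-minimizing varifold, and the gluing of local replacements across overlapping annuli) are exactly where the length of \cite{pitts} and \cite{colding-delellis} resides, so if this were meant as a self-contained proof those steps would need substantial expansion.
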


 The condition ${\bf L}([\Phi])>\sup_{x\in \partial I^k}{\rm area}(\Phi(x))$ in the above theorem  is important and reflects the fact that $[\Phi]$ is  capturing  some nontrivial topology of $\mathcal Z_2(S^3)$. One should also expect that the index of $\Sigma$, i.e., the maximum number of linearly independent deformations whose linear combinations all decrease the area of $\Sigma$, should be no higher than $k$. This is because the definition of $ {\bf L}([\Phi)]$ suggests that $\Sigma$ is maximized in at most $k$ directions. Due to the technical nature of the subject, this expectation remains to be proven.  

It is instructive to study a simple example. The equator $S^3\cap \{x_4=0\}$ in $S^3$ is a minimal surface and its index is one. One way to see that its index is greater than or equal to one is to note that  if we move the equator up with constant speed,  the area decreases. On the other hand, any deformation of the equator that fixes the enclosed volume cannot decrease the area because the equator is the least area surface that divides $S^3$ into two pieces of the same volume. Therefore, the index of the equator is exactly one. 

To produce the equator  using the Almgren-Pitts Min-max Theory we consider the map
 $$\Phi_1:[0,1]\rightarrow \mathcal Z_2(S^3), \quad\Phi_1(t)=S^3\cap\{x_4=2t-1\}$$ and the corresponding homotopy class $[\Phi_1]$. Given  $\Psi\in [\Phi_1]$, there is some $0\leq t_0\leq1$ such that  $\Psi(t_0)$ divides $S^3$ in two pieces of identical volume. Hence the area of $\Psi(t_0)$ must be  bigger than or equal to the area of an equator, and this implies  $L([\Phi_1])\geq 4\pi$. Moreover, $$L([\Phi_1])\leq \sup_{0\leq t\leq 1}{\rm area}(\Phi_1(t))= 4\pi$$
 and thus $L([\Phi_1])=4\pi$.

%\subsection{Motivation behind the min-max approach} 

The construction of the equator by  the Almgren-Pitts min-max theory is so natural that we became  interested in the following  question:
\begin{center}
{\em Can we produce the Clifford torus using a min-max method?}
\end{center}

This question is specially suggestive given the following result of Urbano: 
\begin{thm}[\cite{urbano} , 1990]  Let $\Sigma$ be a compact minimal surface of $S^3$ with index no bigger than $5$. Then either $\Sigma$ is an equator (of index one) or a Clifford torus (of index $5$).
 \end{thm}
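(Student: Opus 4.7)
My plan is to study the Jacobi operator $L = \Delta_\Sigma + |A|^2 + 2$ of the area functional at the minimal surface $\Sigma \subset S^3$; its positive eigenvalues count the index, so the index equals the maximal dimension of a subspace of $C^\infty(\Sigma)$ on which the bilinear form $(u,v) \mapsto \int_\Sigma u\, L v\, d\mu$ is positive definite. The strategy is to exhibit such a subspace of dimension at least $5$ whenever $\Sigma$ is not an equator, and then to analyze the rigidity of this bound.

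The starting point is the identity
$$L \langle a, N\rangle = 2 \langle a, N\rangle \qquad \text{for every } a \in \R^4,$$
where $N$ is the unit normal. This is a direct consequence of the vectorial equation $\Delta_\Sigma N = -|A|^2 N$, which is standard on a minimal hypersurface of $S^3$ (Codazzi plus $H=0$). Writing $n_a := \langle a, N\rangle$, the constant function satisfies $L\cdot 1 = |A|^2 + 2$, and self-adjointness of $L$ gives the relation $\int_\Sigma n_a |A|^2\, d\mu = 0$. If $\Sigma$ is totally geodesic then $|A|^2 \equiv 0$ and a direct computation on $S^2$ shows the index is $1$, so from now on I assume $|A|^2 \not\equiv 0$.

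I would then verify that $W := \mathrm{span}(1, n_{a_1}, \ldots, n_{a_4})$ is $5$-dimensional for any basis $\{a_i\}$ of $\R^4$. Injectivity of $a \mapsto n_a$ follows because $n_a \equiv 0$ forces $a$ perpendicular to $N$ and to $A(T_x\Sigma) = T_x\Sigma$ at every non-umbilic point, so $a \in \R x$ there; varying $x$ across the nonempty non-umbilic open set yields $a = 0$. Independence of $1$ from the $n_{a_i}$ comes from applying $L$ to a putative linear dependence and invoking $|A|^2 \not\equiv 0$. A direct expansion, using $L n_a = 2 n_a$ together with $\int n_a(|A|^2 + 2)\, d\mu = 2 \int n_a\, d\mu$, writes the quadratic form $Q(u) := \int u L u\, d\mu$ on $W$ as
$$Q(u) = A\, c_0^2 + 4 c_0 (v \cdot c) + 2\, c^T M c, \qquad u = c_0 + \sum_i c_i n_{a_i},$$
with $A = \int (|A|^2 + 2)\, d\mu$, $v_i = \int n_{a_i}\, d\mu$, $M_{ij} = \int n_{a_i} n_{a_j}\, d\mu$. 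By Schur complement, positive definiteness of $Q$ reduces to the strict positive-definite inequality $A\, M > 2\, v v^T$ of symmetric matrices. Cauchy--Schwarz gives $v v^T \leq |\Sigma|\, M$, while $A > 2|\Sigma|$ strictly since $|A|^2 \not\equiv 0$; combining yields $A\, M > 2|\Sigma|\, M \geq 2\, v v^T$, so $Q$ is positive definite on $W$ and $\mathrm{index}(\Sigma) \geq 5$.

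The main obstacle is the equality case. If $\mathrm{index}(\Sigma) = 5$, then no test function $\phi$ in the $L^2$-orthogonal complement of $W$ can satisfy $\int \phi L \phi > 0$, and one must conclude that $\Sigma$ is the Clifford torus. The plan is to push the inequalities above to saturation: testing $Q$ against carefully chosen perturbations, such as the $W$-orthogonal parts of quadratic expressions $n_{a_i} n_{a_j}$, should produce sharp pointwise identities forcing $|A|^2$ to be constant. The classical rigidity theorem of Chern--do Carmo--Kobayashi then identifies $\Sigma$ with the Clifford torus, the only nontrivial closed minimal surface in $S^3$ with $|A|^2 \equiv 2$. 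Turning the spectral saturation into pointwise curvature information is the delicate step.
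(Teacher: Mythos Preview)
Your argument for $\mathrm{index}(\Sigma)\geq 5$ is correct, though it takes a slightly different route from the paper's sketch. The paper uses as its fifth direction the \emph{first eigenfunction} $\phi_1$ of the Jacobi operator: since the ground state is simple while the $n_{a_i}$ span a $4$-dimensional eigenspace, the first eigenvalue must lie strictly below the eigenvalue of the $n_{a_i}$, and $\phi_1$ is then automatically orthogonal to them --- five independent negative directions with no further computation. You instead take the constant function $1$ and verify positive definiteness of the index form on $\mathrm{span}(1,n_{a_1},\dots,n_{a_4})$ by an explicit Schur-complement estimate. Both routes work; the paper's is slicker because eigenspace orthogonality comes for free, while yours avoids appealing to the simplicity of the ground state.

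The rigidity step, however, is where your proposal remains a plan rather than a proof. One correctable slip: from $\mathrm{index}=5$ you cannot conclude $Q(\phi)\leq 0$ for every $\phi$ in the $L^2$-orthogonal complement of $W$, only for $\phi$ in the $Q$-orthogonal complement (that is, $\int \phi\,n_{a_i}=0$ together with $\int \phi(|A|^2+2)=0$ rather than $\int\phi=0$); since $1$ is not an eigenfunction these two conditions genuinely differ. More substantively, the suggestion to test against the $W$-orthogonal parts of $n_{a_i}n_{a_j}$ and thereby force $|A|^2$ to be constant is not carried out, and you yourself flag it as the delicate point. The paper records that Urbano's own argument for the equality case proceeds through the \emph{Gauss--Bonnet theorem} rather than through Chern--do~Carmo--Kobayashi: the spectral information available when the index is exactly five (first eigenvalue simple and strictly below $-2$, the $(-2)$-eigenspace exactly four-dimensional, all remaining eigenvalues nonnegative) is combined with Gauss--Bonnet to identify $\Sigma$ with the Clifford torus. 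Your alternative endpoint via Chern--do~Carmo--Kobayashi would be perfectly acceptable once $|A|^2\equiv\mathrm{const}$ is established, but that is precisely the step you have not supplied.
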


The proof consists in a very elegant and short argument that we now quickly describe. Denote by $\mathcal L$ the second variation operator associated to the area functional, i.e., if  $N$ is the unit normal vector to $\Sigma$, then for every $f\in C^{\infty}(\Sigma)$ we have
$$\frac{d^2}{(dt)^2}_{|t=0}{\rm area}\left(P_{t}(\Sigma)\right)=- \int_{\Sigma}f\mathcal{L}f\,d\Sigma,$$%\quad{where}\quad \mathcal{L}f=\Delta f + (|A|^2 + 2)f $$
where $\{P_t\}_{-\varepsilon<t<\varepsilon}$ is a one parameter family of diffeomorphisms generated by a vector field $X$ that satisfies  $X=fN$ along $\Sigma$.

Let $e_1,\ldots,e_4$ denote the coordinate vectors in $\R^4$. An explicit computation shows that the functions $\langle N,e_i\rangle$, $i=1,\ldots,4$,  are eigenfunctions of $\mathcal L$ on $\Sigma$ with  eigenvalue $\lambda=-2$. One can also see that they are linearly independent,  unless the minimal surface $\Sigma$ is an equator. Moreover,  it is well known that the first eigenfunction has multiplicity one and so  it cannot belong to the span of $\{\langle N,e_1\rangle,\ldots, \langle N,e_4\rangle\}$ (unless again the minimal surface $\Sigma$ is an equator).  Hence, if $\Sigma$ is not an equator then the index of $\Sigma$ is at least $5$. In case it is equal to $5$, Urbano used the Gauss--Bonnet Theorem to show that $\Sigma$ has to be the Clifford torus.

Motivated by this, we defined a $5$-parameter family of surfaces that we explain now after introducing some notation.
Let $F_v$ be the conformal maps defined in \eqref{conformal.maps}.  Given an embedded surface $S=\partial \Omega$, where $\Omega$ is a region of $S^3$,  we denote  by $S_t$ the surface at distance $t$ from $S$, which means that $S_t$ is given by $$\partial\{x\in S^3:d(x,\Omega)\leq t\}\quad\mbox{if} \quad  0\leq t\leq \pi$$ and by $$\partial(S^3\setminus\{x\in S^3:d(x,S^3\setminus\Omega)\leq -t\})\quad\mbox{if}\quad-\pi\leq t<0.$$
Note that  $S_t$ is not necessarily a smooth surface (due to  focal points), but it is nonetheless the boundary of an open set with finite perimeter hence constitutes a  well defined element of $\mathcal Z_2(S^3)$. 

Given an embedded compact surface $\Sigma\subset S^3$, we defined in \cite{marques-neves} the {\em canonical family} $\{\Sigma_{(v,t)}\}_{(v,t)\in B^4\times [-\pi,\pi]}$  of $\Sigma$ by
$$ \Sigma_{(v,t)}=(F_v(\Sigma))_t\in \mathcal Z_2(S^3).$$
If $\Sigma$ is the Clifford torus, then  the infinitesimal deformations of $\Sigma_{(v,t)}$ near $(v,t)=(0,0)$ correspond to the $5$ linearly independent directions described in the proof of Urbano's Theorem.   In light of this, we decided to apply the min-max method  to the  homotopy class of such canonical families  in order to produce the Clifford torus.

The canonical family also has the great property that
$$
\area(\Sigma_{(v,t)})\leq \mathcal W(F_v(\Sigma)) =\mathcal W(\Sigma),\quad\mbox{for all }(v,t)\in  B^4\times [-\pi,\pi].
$$
 The above inequality is just like inequality  \eqref{heintze}, while the identity is a consequence of the conformal invariance of the Willmore energy.

From these ingredients we devised  a strategy to prove the Willmore conjecture. If the homotopy class  $\Pi$, determined by the canonical family associated to a surface with positive genus,   indeed produced the Clifford torus via min-max then we would have from the above inequality that 
$$2\pi^2=L(\Pi)\leq \sup_{(v,t)\in B^4\times [-\pi,\pi]} \area(\Sigma_{(v,t)})\leq \mathcal W(\Sigma).$$ 
At this point the question of whether  or not the canonical family could produce  the Clifford torus by a min-max method was upgraded from an issue on which we had an academic interest  to a question which we {\em really} wanted to answer. 

Hence it became important to  understand the geometric and topological properties of the canonical family, especially the behavior of $\Sigma_{(v,t)}$ as $(v,t)$ approaches the boundary of  the parameter space $B^4\times [-\pi,\pi]$. The fact that the diameter of $S^3$ is equal to $\pi$ implies that $\Sigma_{(v,\pm\pi)}=0$ for all $v\in B^4$. Hence we are left to analyze what happens when $v$ approaches $S^3=\partial B^4$.

Assume $v\in B^4$ converges to $p\in S^3$. If $p$ does not belong to $\Sigma$, then  it should be clear that $F_v(\Sigma)$  is pushed into  $\{-p\}$ as $v$ tends to $p$, and  that ${\rm area}(F_v(\Sigma))$ converges to zero in this process. When $p$ lies in $\Sigma$ the situation is  more subtle. Indeed,  if $v$ approaches $p$ radially, i.e., $v=sp$ with $0<s<1$, then $F_{sp}(\Sigma)$ converges, as $s$ tends to $1$, to the unique great sphere tangent to $\Sigma$ at $p$. Therefore the family of   continuous functions in $S^3$ given by $f_s(p)= {\rm area}\,(\Sigma_{sp})$ converges pointwise, as $s\to 1$, to a discontinuous function that is zero outside $\Sigma$ and $4\pi$ along $\Sigma$. 

Therefore,  for any $0<\alpha<4\pi$ and $p\in \Sigma$, there must exist a sequence $\{v_i\}_{i\in\N}$ in  $B^4$ converging  to $p$ so  that $ {\rm area}(\Sigma_{v_i})$ converges to $\alpha$ and thus it is natural to expect that the convergence of $F_v(\Sigma)$ depends on how $v$ approaches $p\in \Sigma$. A careful analysis revealed that, depending on the angle at which $v$ tends to $p$, $F_v(\Sigma)$ converges to a round sphere tangent to $\Sigma$ at $p$, with radius and center depending on the angle of convergence.

Initially we were somewhat puzzled by this behaviour, but then we realized that, even if this parametrization became discontinuous near the boundary of the parameter space, the closure of the family $\{\Sigma_{(v,t)}\}_{(v,t)\in B^4\times [-\pi,\pi]}$ in $\mathcal Z_2(S^3)$  constituted  a nice continuous $5$-cycle relative to the space of round spheres $\mathcal{G}$. In other words, the discontinuity of the canonical family was being caused by  the parametrization of the conformal maps of $S^3$ chosen in \eqref{conformal.maps}. 

To address this issue we performed a blow-up procedure along the surface $\Sigma$ and we were able to  reparametrize the canonical  family by a continuous map $\Phi:I^5\to \mathcal Z_2(S^3)$. The image  $\Phi(I^5)$  is equal to the closure of $\{\Sigma_{(v,t)}\}_{(v,t)\in B^4\times [-\pi,\pi]}$ in $\mathcal Z_2(S^3)$. Moreover, we have:
\begin{itemize}
\item[(A)] $\sup_{x\in I^5}{\rm area}(\Phi(x))=\sup_{(v,t)\in B^4\times[-\pi,\pi]}\area(\Sigma_{(v,t)})\leq \mathcal W(\Sigma);$
\vskip 0.05in
\item[(B)] $\Phi(x,0)=\Phi(x,1) = 0$ for any $x\in I^4$;
\vskip 0.05in

\item[(C)]  for any $x\in \partial I^4$   there exists $Q(x)\in S^3$ such that $\Phi(x,t)$ is a sphere of radius $\pi t$ centered at $Q(x)$ for every $t\in I$.
\end{itemize} 
The explicit expression for the center map $Q:\partial I^4\rightarrow S^3$  mentioned in property  (C) can be found in \cite{marques-neves}.  

Finally, we discovered a key topological fact: 
\begin{itemize}
\item[(D)] the degree of the center map $Q:S^3\rightarrow S^3$ is  equal to the genus of $\Sigma$. Hence it is nonzero by assumption.
\end{itemize}

This point is absolutely crucial  because it shows  that the topology of the surface $\Sigma$, i.e. the information of its genus, determines topological properties of the map $\Phi:I^5 \rightarrow \mathcal{Z}_2(S^3)$.  Note that if the surface $\Sigma$ we start with is a topological sphere,   our approach could not  work because the Willmore conjecture fails in this case. The genus of the surface $\Sigma$ enters in our approach via property (D). 
 
In \cite{marques-neves}, we proved the following result: 

\begin{thm}[$2\pi^2$ Theorem] Consider a continuous map $\Phi:I^5\to \mathcal Z_2(S^3)$ satisfying properties (B), (C), and (D). Then
$$\sup_{x\in I^5}{\rm area}(\Phi(x))\geq 2\pi^2.$$
\end{thm}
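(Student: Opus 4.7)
The plan is to argue by contradiction via the Almgren-Pitts Min-max Theorem applied to the homotopy class $[\Phi]$. Suppose $\sup_{x\in I^5}{\rm area}(\Phi(x))<2\pi^2$, so that $L([\Phi])<2\pi^2$. First I would compute the boundary data: by (B) the slices vanish on $I^4\times\partial I$, and by (C) the slices over $\partial I^4\times I$ are geodesic spheres of radius $\pi t$ in $S^3$, each of area $4\pi\sin^2(\pi t)\leq 4\pi$. Hence
$$\sup_{x\in\partial I^5}{\rm area}(\Phi(x))\leq 4\pi<2\pi^2.$$

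The heart of the argument is to upgrade this to the strict inequality $L([\Phi])>4\pi$, which is where hypothesis (D) must be used. Suppose for contradiction that $L([\Phi])=4\pi$ and pick a sequence $\Psi_k\in[\Phi]$ with $\sup_x{\rm area}(\Psi_k(x))\to 4\pi$. For each fixed $x\in I^4$, the path $t\mapsto \Psi_k(x,t)$ goes from the zero cycle to itself (by (B)), so by continuity of the enclosed volume there is some $t_k(x)\in(0,1)$ at which $\Psi_k(x,t_k(x))$ bounds a region of volume $\frac{1}{2}\vol(S^3)$. Isoperimetry in $S^3$ forces this slice to have area $\geq 4\pi$, with near-equality only if it is close (in the flat sense) to a great sphere. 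Assigning to each $x$ the center of this near-great-sphere produces a continuous extension $\tilde Q: I^4\to S^3$ of $Q|_{\partial I^4}$, since along $\partial I^4$ the half-volume slice of $\Phi$ is by (C) the geodesic sphere of radius $\pi/2$ around $Q(x)$. Because $I^4$ is contractible, $Q$ would then be null-homotopic, hence of degree zero, contradicting (D). I expect this step to be the main obstacle: it needs a quantitative isoperimetric stability statement in $S^3$, together with a careful compactness/interpolation argument to promote the pointwise definition of $\tilde Q$ to a genuinely continuous map.

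Granted $L([\Phi])>4\pi\geq\sup_{\partial I^5}{\rm area}(\Phi)$, the Min-max Theorem applies and produces closed embedded minimal surfaces $\Sigma_1,\dots,\Sigma_N\subset S^3$ with positive integer multiplicities $m_i$ satisfying $L([\Phi])=\sum_i m_i\,{\rm area}(\Sigma_i)$. By Almgren's lower bound (equivalently, the Willmore inequality applied to minimal surfaces) every closed minimal surface in $S^3$ has area at least $4\pi$; combined with $L([\Phi])<2\pi^2<8\pi$ this forces $N=1$, $m_1=1$, and a single embedded minimal $\Sigma$ with $4\pi<{\rm area}(\Sigma)<2\pi^2$.

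Finally, since $\Phi$ is a $5$-parameter family, the produced $\Sigma$ is expected to have Morse index at most $5$. Granted this, Urbano's Theorem (quoted just above) forces $\Sigma$ to be either an equator (area $4\pi$) or the Clifford torus (area $2\pi^2$), both incompatible with ${\rm area}(\Sigma)\in(4\pi,2\pi^2)$, and the contradiction completes the proof. A secondary technical obstacle is therefore establishing the index bound ${\rm index}(\Sigma)\leq 5$; as the excerpt warns, this is not formally part of Pitts's theorem, and will likely require a tailored Lusternik-Schnirelmann-type deformation argument specific to the $5$-dimensional homotopy class $[\Phi]$.
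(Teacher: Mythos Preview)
Your proposal follows essentially the paper's strategy---prove $L([\Phi])>4\pi$ via the degree obstruction from (D), apply the Min-max Theorem, reduce to a single connected multiplicity-one minimal surface using $L<8\pi$, then invoke Urbano through an index-$\leq 5$ bound---and you correctly identify both genuine technical hurdles (the paper resolves the index issue by exploiting the extra structure of the canonical family rather than by a general Lusternik--Schnirelmann deformation). One correction to your $L>4\pi$ step: that each vertical path $t\mapsto\Psi_k(x,t)$ must hit a half-volume slice does \emph{not} follow from (B) alone, since a loop based at $0$ in $\mathcal Z_2(S^3)$ could be null-homotopic and sweep out zero net volume; the missing ingredient is (C), which forces the loops over $\partial I^4$ to be honest sweepouts, after which constancy of the class in $\pi_1(\mathcal Z_2(S^3),0)\cong\Z$ over the connected $I^4$ gives what you need.
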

 
We now briefly sketch some of the main ideas behind the proof of the $2\pi^2$ Theorem. The first thing to show is that 
\begin{equation}\label{topo}
L([\Phi])>4\pi=\sup_{x\in \partial I^5} {\rm area}(\Phi(x)).
\end{equation}

The proof is by contradiction, hence assume $L([\Phi])=4\pi$.
Let $\mathcal R$ denote the space of all oriented great spheres. This space is canonically homeomorphic to $S^3$ by identifying a great sphere with its center. With this notation, we have  from condition (C)  that $$\Phi(\partial I^4\times\{1/2\})\subset \mathcal R.$$
Moreover, the degree of the map $\Phi:\partial I^4\times\{1/2\}\rightarrow \mathcal R\approx S^3$ is equal to  ${\rm deg}(Q)$ by property  (D), and thus it is nonzero. For simplicity, suppose we can find $\Psi\in[\Phi]$ so that
 $$\sup_{x\in I^5}{\rm area}(\Psi(x))={\bf L}([\Phi])=4\pi.$$ 

The basic fact is that, given any continuous path $\gamma: [0, 1]\rightarrow I^5$ connecting $I^4 \times \{0\}$ to $I^4 \times \{1\}$, the map $\Psi\circ\gamma:[0,1]\rightarrow \mathcal{Z}_2(S^3)$ is a one-parameter sweepout of $S^3$ with 
$$
\sup_{t\in [0,1]} {\rm area}((\Psi\circ\gamma)(t)) \leq 4\pi.
$$
But $4\pi$ is the optimal area for the one-parameter min-max in $S^3$. Hence 
$$
\sup_{t\in [0,1]} {\rm area}((\Psi\circ\gamma)(t)) =4\pi,
$$
and there must exist some $t_0\in (0,1)$ such that $\Psi(\gamma(t_0))$ is a great sphere, i.e., such that $\Psi(\gamma(t_0))\in \mathcal R$. 
 
 Using this, we argue in \cite{marques-neves} that there should be a $4$-dimensional submanifold $R$ in $I^5$, separating the top from the bottom of the cube, such that $\Psi(R)\subset \mathcal{R}$ and $\partial R=\partial I^4\times \{1/2\}$.
Hence
$$\Psi_{*}[\partial R]=\partial[ \Psi(R)]=0\mbox{ in }H_3(\mathcal R,\Z).$$
On the other hand, $\Psi=\Phi$ on $\partial R=\partial I^4\times \{1/2\}$ and so
$$\Psi_{*}[\partial R]=\Phi_{*}[\partial I^4\times\{1/2\}]={\rm deg}(Q)[\mathcal R]\neq 0.$$
This is a contradiction, hence $L([\Phi])>4\pi$.

Because of this strict inequality, we can invoke the Min-max Theorem and obtain a closed minimal surface $\hat{\Sigma}\subset S^3$, possibly disconnected and with integer multiplicities, such that $L([\Phi])=\area(\hat{\Sigma})$. Since the area of any compact minimal surface in $S^3$ is at least $4\pi$, we  assume  that  $\hat{\Sigma}$ is connected with multiplicity one. Otherwise  $L([\Phi])={\rm area}(\hat{\Sigma})\geq 8\pi>2\pi^2$ and we would be done. 

 It is natural to expect that $\hat{\Sigma}$ has index at most five because $\Phi$ is defined  on a $5$-cube.  Urbano's Theorem would imply in this case that $\hat{\Sigma}$ should be either an equator or a Clifford torus. But since $L([\Phi])=\area(\hat{\Sigma})>4\pi$, the surface  $\hat{\Sigma}$ would have to be a Clifford torus, and then
 $$2\pi^2=\area(\hat{\Sigma})=L([\Phi])\leq\sup_{x\in I^5}{\rm area}(\Phi(x)).$$
Because the  index estimate in the Almgren-Pitts theory is not available, we had to exploit the extra structure coming from the canonical family to get an index estimate. See  \cite{marques-neves} for more details. 

\begin{proof}[Proof of Theorem \ref{main.thm}] We can now put everything together and explain how to prove the inequality in Theorem \ref{main.thm}. Through a stereographic projection, we can think of $\Sigma$ as a compact surface with positive genus  in $S^3$. The canonical family associated to $\Sigma$ gives us a  map $\Phi:I^5\to \mathcal Z_2(S^3)$ satisfying properties (A), (B), (C), and (D). We use the $2\pi^2$ Theorem and property (A) to conclude that
$$2\pi^2\leq\sup_{x\in I^5}{\rm area}(\Phi(x))\leq \mathcal W(\Sigma).$$
In the equality case $\mathcal W(\Sigma)=2\pi^2$, there must exist some $(v_0,t_0)$ such that ${\rm area}(\Sigma_{(v_0,t_0)}) = \mathcal W(\Sigma_{v_0})= 2\pi^2.$ With some extra work (see \cite{marques-neves}) we prove that $t_0=0$ and that $\Sigma_{v_0}$ is the Clifford torus. Since $\Sigma=F_{v_0}^{-1}(\Sigma_{v_0})$ and $F_{v_0}$ is conformal, the rigidity case also follows.
\end{proof}

\section{Beyond the Willmore Conjecture}

The study of the Willmore energy is a beautiful subject which, as we could see in Sections \ref{partial} and  \ref{approach}, has brought together  ideas from conformal geometry, geometric analysis, algebraic geometry, partial differential equations and geometric measure theory. Nonetheless many fundamental questions remain unanswered. We finish by discussing some of them.

A basic question is to determine the minimizing shape among surfaces of genus $g$ in $\R^3$, for $g\geq 2$. A conjecture of Kusner \cite{kusner96} states that the minimizer is $\xi_{1,g}$, a genus $g$ minimal surface found by Lawson \cite{lawson70}. Some numerical evidence for this conjecture was provided in \cite{hsu-kusner-sullivan}. It would be interesting to determine the index of $\xi_{1,2}$. %We have no real evidence, but wishful   thinking leads to conjecture that it is $9$.

Another natural question is whether the Clifford torus also minimizes the Willmore energy among tori in $\R^4$. Li and Yau  \cite{li-yau} showed that the Willmore energy of any immersed $\RP^2$ in $\R^4$ is greater  than or equal to $6\pi$. This value is optimal because of the Veronese surface. It would be nice to have an analogue of Urbano's Theorem in this setting.

For surfaces in $\CP^2$, Montiel and Urbano \cite{montiel-urbano} showed that the quantity $\mathcal W(\Sigma)=\int_{\Sigma}2+|H|^2d\mu$ (the Willmore energy) is conformally invariant. They conjectured  that the Clifford torus  minimizes the Willmore energy among all tori. The Willmore energy of the Clifford torus in $\CP^2$  is equal to $8\pi^2/3\sqrt 3$. For comparison,    the  Willmore energy of a complex projective line is $2\pi$ and there are  totally geodesic projective planes with Willmore energy equal to $4\pi$. Montiel and Urbano also showed that the Willmore energy of complex tori is greater than or equal 
to $6\pi>8\pi^2/3\sqrt 3$, which gives some evidence towards their conjecture. If answered positively, this conjecture would help in finding the  nontrivial Special Lagrangian cone in $\C^3$ with least possible density.

Another interesting problem \cite{kusner89} is to  determine the infimum of the Willmore energy in $\R^3$ or $\R^4$ among all non-orientable surfaces of a given genus {or among  all surfaces in a given regular homotopy class (for instance, in the regular homotopy class of a twisted torus)}.  As far as we know, it is not even known whether the minimum is attained. In a related problem, Kusner \cite{kusner96} conjectured  that  a surface in $\R^4$ with Willmore energy smaller than $6\pi$ has to be a sphere.

The study of the gradient flow of the Willmore energy, referred to as the Willmore flow, also suggests many unanswered  questions. For instance, it is not known whether the flow develops finite time singularities. There is numerical evidence \cite{blatt, mayer} showing that it can  happen.  From Bryant's classification (\cite{bryant2}), we know that the lowest energy of  a non-umbilical immersed Willmore sphere is $16\pi$. Hence one would  expect that Willmore spheres with energy $16\pi$ could be perturbed  so that the flow exists for all time and converges to a round sphere. Lamm and Nguyen classified the singularity models that could potentially arise in this situation \cite{lamm}. 

There is an interesting relation with the process of turning a sphere inside out. The existence of these deformations, called  sphere eversions,  was discovered by Smale long ago \cite{smale-eversion}. The point is that, by a result of Banchoff and Max \cite{banchoff-max}, every sphere eversion must pass through a sphere that has at least one quadruple point. The Willmore energy of such sphere is by Li-Yau's Theorem  greater than or equal to $16\pi$. The existence of a Willmore flow line  connecting a Willmore sphere of energy $16\pi$ to a round sphere would therefore provide an optimal sphere eversion. This possibility was proposed by Kusner and confirmed experimentally in \cite{francis}. 

\vspace{1cm}

{{\bf Acknowledgements:}   The first author is grateful to  \'{E}cole Polytechnique, \'{E}cole Normale Sup\'{e}rieure and Universit\'{e} Paris-Est (Marne-la-Vall\'{e}e)
for the hospitality during the writing of this paper. 

{Both authors are very grateful to Robert Bryant and Robert Kusner for their various remarks and suggestions that improved the exposition of the paper.} }

\bibliographystyle{amsbook}

\end{document}